\theoremstyle{plain}
\newtheorem{theorem}{Theorem}
\newtheorem{lemma}[theorem]{Lemma}
\theoremstyle{definition}
\theoremstyle{remark}
\begin{document}

\begin{center}
\textbf{\large Fibonacci and Lucas Numbers Associated with Brocard-Ramanujan Equation}
\end{center}

\begin{center}
Prapanpong Pongsriiam

Department of Mathematics, Faculty of Science\\ Silpakorn University\\ Nakhon Pathom 73000, Thailand

prapanpong@gmail.com, pongsriiam\_p@silpakorn.edu
\end{center}

\begin{abstract}
We explicitly solve the diophantine equations of the form 
$$
A_{n_1}A_{n_2}\cdots A_{n_k}\pm 1 = B_m^2
$$
where $(A_n)_{n\geq 0}$ and $(B_m)_{m\geq 0}$ are either the Fibonacci sequence or Lucas sequence. This extends the result of D. Marques (2011) and L. Szalay (2012) concerning a variant of Brocard-Ramanujan equation. This is a manuscript of the article published in Communications of the Korean Mathematical Society, 32(3) (2017), pp. 511-522
\end{abstract}

2010 Mathematics Subject Classification. Primary 11B39; Secondary 11D99

Key words and Phases. Fibonacci number, Lucas number, Brocard-Ramanujan equation, Diophantine equation

\section{Introduction}

Let $(F_n)_{n\geq 0}$ be the Fibonacci sequence given by $F_0 = 0$, $F_1 = 1$, and $F_n = F_{n-1}+F_{n-2}$ for $n\geq 2$, and let $(L_n)_{n\geq 0}$ be the Lucas sequence given by the same recursive pattern as the Fibonacci sequence but with the initial values $L_0 = 2$ and $L_1 = 1$. The problem of finding all integral solutions to the diophantine equation 
\begin{equation}\label{eq1}
n!+1 = m^2
\end{equation}
is known as Brocard-Ramanujan problem. The known solutions to \eqref{eq1} are $(n,m) = (4,5), (5,11)$, and $(7,71)$ and it is still open whether the Brocard-Ramanujan equation has a solution when $n\geq 8$. Some variations of \eqref{eq1} have been considered by various authors and we refer the reader to \cite{BGa, Dab, DUl, Luc} and references therein for additional information and history. 

Marques \cite{Mar} considered a variant of \eqref{eq1} by replacing $n!$ by the product of consecutive Fibonacci numbers and $m^2$ by a square of a Fibonacci number. He claimed that the diophantine equation
\begin{equation}\label{eq1.2}
F_nF_{n+1}\cdots F_{n+k-1} + 1 = F_m^2
\end{equation}
 has no solution in positive integers $k$, $m$, $n$. But this is wrong, for example, $F_4+1 = F_3^2$ and $F_6+1 = F_4^2$ give solutions to the above equation. Szalay \cite[Theorem 2.1]{Sza} gives a correct version of Marques's result and considers the equations more general than \eqref{eq1.2}.

In this article, we continue the investigation by solving the following diophantine equations:
\begin{equation}\label{BReq1}
F_{n_1}F_{n_2}\cdots F_{n_k} \pm 1 = F_m^2,
\end{equation}
\begin{equation}\label{BReq2}
L_{n_1}L_{n_2}\cdots L_{n_k} \pm 1 = L_m^2,
\end{equation}
\begin{equation}\label{BReq3}
F_{n_1}F_{n_2}\cdots F_{n_k} \pm 1 = L_m^2,
\end{equation}
\begin{equation}\label{BReq4}
L_{n_1}L_{n_2}\cdots L_{n_k} \pm 1 = F_m^2,
\end{equation}
where $m\geq 0$, $k\geq 1$, $0\leq n_1 \leq n_2 \leq \cdots \leq n_k$. Note that unlike Marques \cite{Mar} and Szalay \cite{Sza}, we do not require $n_1, n_2, \ldots, n_k$ to be distinct. So \eqref{BReq1}, \eqref{BReq2}, \eqref{BReq3}, and \eqref{BReq4} are actually equivalent to, respectively, 
\begin{align*}
F_{n_1}^{a_1}F_{n_2}^{a_2}\cdots F_{n_\ell}^{a_\ell} \pm 1 &= F_m^2\\
L_{n_1}^{a_1}L_{n_2}^{a_2}\cdots L_{n_\ell}^{a_\ell} \pm 1 &= L_m^2\\
F_{n_1}^{a_1}F_{n_2}^{a_2}\cdots F_{n_\ell}^{a_\ell} \pm 1 &= L_m^2\\
L_{n_1}^{a_1}L_{n_2}^{a_2}\cdots L_{n_\ell}^{a_\ell} \pm 1 &= F_m^2
\end{align*}  
where $m\geq 0$, $\ell\geq 1$, $0\leq n_1 < n_2 < \cdots < n_\ell$, and $a_1, a_2, \ldots, a_\ell \geq 1$. For convenience, we sometimes go back and forth between the equations given in \eqref{BReq1} to \eqref{BReq4} and those which are equivalent to them such as the above ones. 

Note that Szalay \cite[Theorem 3.2]{Sza} considers the equation 
\begin{equation}\label{BReq5}
L_{n_1}L_{n_2}\cdots L_{n_k} + 1 = L_m^2
\end{equation} 
in non-negative integers $n_1 < n_2 < \cdots < n_k$, but it seems that he actually skips zero and thus missing the solution given by $L_0L_3 + 1 = L_2^2$. We give a correct version to this problem in Theorem \ref{thm3.103.6paper}. Finally, we remark that similar equations are also considered by Pongsriiam in \cite{Pon} and \cite{Pon1} where $F_m^2$ and $L_m^2$ in \eqref{BReq1}, \eqref{BReq2}, \eqref{BReq3}, and \eqref{BReq4} are replaced by $F_m$ and $L_m$, and where $\pm1$ is replaced by $0$.  

\section{Preliminaries and lemmas}

Since one of our main tools in solving the above equations is the primitive divisor theorem of Carmichael \cite{Car}, we first recall some facts about it. Let $\alpha$ and $\beta$ be algebraic numbers such that $\alpha+\beta$ and $\alpha\beta$ are nonzero coprime integers and $\alpha\beta^{-1}$ is not a root of unity. Let $(u_n)_{n\geq 0}$ be the sequence given by 
$$
\text{$u_0 = 0$, $u_1 = 1$, and $u_n = (\alpha+\beta)u_{n-1} - (\alpha\beta)u_{n-2}$ for $n\geq 2$.}
$$
Then we have Binet's formula for $u_n$ given by 
$$
u_n = \frac{\alpha^n-\beta^n}{\alpha-\beta}\quad\text{for $n\geq 0$.}
$$
So if $\alpha = \frac{1+\sqrt 5}{2}$ and $\beta = \frac{1-\sqrt 5}{2}$, then $(u_n)$ is the Fibonacci sequence.

A prime $p$ is said to be a primitive divisor of $u_n$ if $p\mid u_n$ but $p$ does not divide $u_1u_2\cdots u_{n-1}$. Then the primitive divisor theorem of Carmichael can be stated as follows.
\begin{theorem}\label{carthm} {\rm[Primitive divisor theorem of Carmichael \cite{Car}]}
Suppose $\alpha$ and $\beta$ are real numbers such that $\alpha+\beta$ and $\alpha\beta$ are nonzero coprime integers and $\alpha\beta^{-1}$ is not a root of unity. If $n\neq 1, 2, 6$, then $u_n$ has a primitive divisor except when $n = 12$, $\alpha+\beta = 1$ and $\alpha\beta = -1$. In particular, $F_n$ has a primitive divisor for every $n\neq 1, 2, 6, 12$ and $L_n$ has a primitive divisor for every $n\neq 1,6$.
\end{theorem}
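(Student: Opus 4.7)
My plan is to follow the standard cyclotomic-style approach. Introduce the $n$-th homogeneous cyclotomic polynomial
\[
\Phi_n^*(\alpha,\beta) = \prod_{\substack{1 \leq k \leq n \\ \gcd(k,n)=1}} (\alpha - \zeta_n^k \beta), \qquad \zeta_n = e^{2\pi i/n}.
\]
Since $\alpha,\beta$ are algebraic conjugates with $\alpha+\beta,\alpha\beta \in \mathbb{Z}$, the quantity $\Phi_n^*(\alpha,\beta)$ is a rational integer. Specializing the factorization $X^n - Y^n = \prod_{d\mid n}\prod_{\gcd(k,d)=1}(X - \zeta_d^k Y)$ to $(X,Y) = (\alpha,\beta)$ and dividing by $\alpha-\beta$ gives
\[
u_n = \prod_{\substack{d\mid n \\ d>1}} \Phi_d^*(\alpha,\beta).
\]
Thus the primitive prime divisors of $u_n$ are precisely the primes dividing $\Phi_n^*(\alpha,\beta)$ but not $u_d$ for any $d\mid n$ with $1<d<n$.

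The second step is to control the \emph{non-primitive} part of $\Phi_n^*(\alpha,\beta)$. A short argument with the multiplicative order of $\alpha\beta^{-1}$ modulo a prime $p$, combined with a lifting-the-exponent computation, shows that any prime $p \mid \Phi_n^*(\alpha,\beta)$ that also divides some earlier $u_m$ must satisfy $p \mid n$, and furthermore $v_p(\Phi_n^*(\alpha,\beta)) \leq 1$ except in the extremal case when $p$ is the largest prime factor of $n$. Hence the non-primitive part of $\Phi_n^*(\alpha,\beta)$ is bounded above by $n$.

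The third and hardest step is a lower bound on $|\Phi_n^*(\alpha,\beta)|$. After relabelling so that $|\alpha| \geq |\beta|$, one writes
\[
|\Phi_n^*(\alpha,\beta)| = |\alpha|^{\phi(n)} \prod_{\substack{1\leq k\leq n \\ \gcd(k,n)=1}} \bigl| 1 - \zeta_n^k (\beta/\alpha) \bigr|,
\]
and uses the fact that $\beta/\alpha$ is not a root of unity to establish $|\Phi_n^*(\alpha,\beta)| \gg |\alpha|^{\phi(n)/2}$ for all $n$ past an explicit threshold. The main obstacle I anticipate is making this estimate quantitative enough to leave only a finite, tractable list of $n$ to verify by hand; this is where Baker-type or direct analytic estimates on linear forms in the $\zeta_n^k(\beta/\alpha)$ come in. Once achieved, comparison with the bound from step two forces $\Phi_n^*(\alpha,\beta)$ to possess a primitive prime divisor for large $n$.

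Finally, direct computation disposes of the small cases: the genuine exceptions $n=1,2,6$ correspond to $\phi(n) \leq 2$, where the size estimate carries no content, while the isolated Fibonacci exception $n=12$, $\alpha+\beta=1$, $\alpha\beta=-1$ reflects the accident that $F_{12}=144=2^4\cdot 3^2$ has every prime factor already present in $F_3=2$ and $F_4=3$. The Fibonacci specialization of the theorem follows on taking $\alpha=(1+\sqrt 5)/2$, $\beta=(1-\sqrt 5)/2$, and the Lucas-number statement follows from the same method applied to the companion sequence $v_n=\alpha^n+\beta^n$ (equivalently, via the identity $L_n=F_{2n}/F_n$ combined with the Fibonacci case).
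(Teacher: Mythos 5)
The paper offers no proof of this statement: it is quoted as Carmichael's 1913 theorem with a citation to \cite{Car} and used as a black box, so the only question is whether your outline would itself constitute a proof. As written it does not. Your first step (the factorization $u_n=\prod_{d\mid n,\,d>1}\Phi_d^*(\alpha,\beta)$) and the general shape of the second (the non-primitive part of $\Phi_n^*(\alpha,\beta)$ is supported on primes dividing $n$ and has size at most $n$) are the correct standard skeleton. But the entire content of the theorem lies in the two places you leave open. The lower bound on $|\Phi_n^*(\alpha,\beta)|$ must be made fully explicit, because the conclusion is not merely ``primitive divisors exist for all sufficiently large $n$'' but the exact exceptional set $\{1,2,6\}$ together with the single sporadic case $n=12$, $\alpha+\beta=1$, $\alpha\beta=-1$; you explicitly defer this (``the main obstacle I anticipate\ldots''), and without it the finite list of $n$ to be checked by hand is never pinned down, which is precisely what the rest of the paper relies on. Moreover, your suggestion to obtain the estimate from Baker-type bounds on linear forms is off target: the theorem hypothesizes \emph{real} $\alpha,\beta$, where the required inequality follows from entirely elementary estimates on $\prod_k|\alpha-\zeta_n^k\beta|$ --- which is how Carmichael could prove this in 1913, half a century before Baker --- whereas transcendence methods enter only in the complex (Lehmer) case settled by Bilu--Hanrot--Voutier \cite{BHV}, which this statement deliberately avoids.

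A smaller but genuine gap is the Lucas half of the ``in particular'' clause. Deducing it via $L_n=F_{2n}/F_n$ yields a primitive divisor of $L_n$ only when $F_{2n}$ has one, i.e.\ for $2n\notin\{2,6,12\}$, hence $n\notin\{1,3,6\}$; the stated exceptional set is $\{1,6\}$, so the case $n=3$ must be disposed of separately by direct inspection ($L_3=4$ and $2\nmid L_1L_2=3$, so $2$ is a primitive divisor). Your sketch does not notice this discrepancy. Also, in your step two the roles are slightly garbled: the standard lemma says that a non-primitive prime divisor of $\Phi_n^*(\alpha,\beta)$ must \emph{be} the largest prime factor of $n$ and then divides it to the first power (outside small cases), rather than the exponent bound failing in that case; the resulting bound of $n$ on the non-primitive part is still correct, but the statement as you give it would not survive being written out carefully.
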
  
There is a long history about primitive divisors and the most remarkable results in this topic are given by Bilu, Hanrot, and Voutier  \cite{BHV}, by Stewart \cite{Ste}, and by Kunrui \cite{Kun}. For example, Bilu et al. \cite{BHV} extends Theorem \ref{carthm} to include the case where $\alpha$, $\beta$ are complex numbers. Nevertheless, Theorem \ref{carthm} is good enough in our situation.

Recall that we can define $F_n$ and $L_n$ for a negative integer $n$ by the formula 
$$
\text{$F_{-k} = (-1)^{k+1}F_k$ and $L_{-k} = (-1)^kL_k$ for $k\geq 0$.}
$$
Then we have the following identity which valid for all integers $m$, $k$. 
\begin{equation}\label{finaleq1}
F_{m-k}F_{m+k} = F_m^2 + (-1)^{m-k+1}F_k^2.
\end{equation}
The identity \eqref{finaleq1} can be proved using Binet's formula as follows:
\begin{align*}
F_{m-k}F_{m+k} &= \left(\frac{\alpha^{m-k}-\beta^{m-k}}{\alpha-\beta}\right)\left(\frac{\alpha^{m+k}-\beta^{m+k}}{\alpha-\beta}\right)\\
&= \frac{\left(\alpha^{m}-\beta^{m}\right)^2+2(\alpha\beta)^m - \left(\alpha^{m-k}\beta^{m+k}+\beta^{m-k}\alpha^{m+k}\right)}{(\alpha-\beta)^2}\\
&= F_m^2 + \frac{2(-1)^m-(-1)^{m-k}\left(\beta^{2k}+\alpha^{2k}\right)}{(\alpha-\beta)^2}\\
&= F_m^2 + \frac{2(-1)^m-(-1)^{m-k}\left((\beta^{k}-\alpha^{k})^2+2(-1)^k\right)}{(\alpha-\beta)^2}\\
&= F_m^2 + (-1)^{m-k+1}F_k^2.
\end{align*}
We will particularly apply \eqref{finaleq1} in the following form.
\begin{lemma}\label{lemma1}
For every $m\geq 1$, we have 
\begin{itemize}
\item[(i)] $F_m^2 - 1 = \begin{cases}
F_{m-1}F_{m+1}, &\text{if $m$ is odd};\\
F_{m-2}F_{m+2}, &\text{if $m$ is even}.
\end{cases}$
\item[(ii)] $F_m^2 + 1 = \begin{cases}
F_{m-1}F_{m+1}, &\text{if $m$ is even};\\
F_{m-2}F_{m+2}, &\text{if $m$ is odd}.
\end{cases}$
\end{itemize}
\end{lemma}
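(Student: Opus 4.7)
The plan is to deduce both parts directly from identity~\eqref{finaleq1}, which has already been proved via Binet's formula, by specializing to $k=1$ and $k=2$.

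First I would set $k=1$ in \eqref{finaleq1}. Since $F_1=1$, this gives $F_{m-1}F_{m+1}=F_m^2+(-1)^{m}$. Splitting on the parity of $m$ yields $F_{m-1}F_{m+1}=F_m^2-1$ when $m$ is odd and $F_{m-1}F_{m+1}=F_m^2+1$ when $m$ is even, which supplies the odd case of (i) and the even case of (ii).

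Next I would set $k=2$ in \eqref{finaleq1}. Since $F_2=1$, this gives $F_{m-2}F_{m+2}=F_m^2+(-1)^{m-1}$. Splitting on parity, we get $F_{m-2}F_{m+2}=F_m^2-1$ when $m$ is even and $F_{m-2}F_{m+2}=F_m^2+1$ when $m$ is odd, which supplies the even case of (i) and the odd case of (ii).

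There is no real obstacle here; the only caveat is that for small $m$ (namely $m=1$) the index $m-2$ becomes negative, but the extension $F_{-k}=(-1)^{k+1}F_k$ recalled just before \eqref{finaleq1} makes the identity valid for all integers $m$, so the statement holds for every $m\geq 1$ as claimed.
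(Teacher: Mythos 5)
Your proposal is correct and is exactly the paper's own argument: the paper proves Lemma \ref{lemma1} by substituting $k=1$ and $k=2$ into \eqref{finaleq1}, and your parity analysis of $(-1)^m$ and $(-1)^{m-1}$ fills in the details accurately. Your remark about $m=1$ and the extension $F_{-k}=(-1)^{k+1}F_k$ is a welcome extra care that the paper leaves implicit.
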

\begin{proof}
This follows from the substitution $k=1$ and $k=2$ in \eqref{finaleq1}. 
\end{proof}

We also need a factorization of $L_m^2\pm1$ as follows.
\begin{lemma}\label{lemma2}
For every $m\geq 1$, we have 
\begin{itemize}
\item[(i)] $L_m^2 - 1 = \begin{cases}
\displaystyle F_{3m}/F_{m}, &\text{if $m$ is even};\\
5F_{m-1}F_{m+1}, &\text{if $m$ is odd}.
\end{cases}$
\item[(ii)] $L_m^2 + 1 = \begin{cases}
\displaystyle F_{3m}/F_{m}, &\text{if $m$ is odd};\\
5F_{m-1}F_{m+1}, &\text{if $m$ is even}.
\end{cases}$
\end{itemize}
\end{lemma}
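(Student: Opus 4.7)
The plan is to derive both formulas from Binet's formula for $F_n$ and $L_n$, essentially mimicking the derivation of \eqref{finaleq1} given just above. Throughout I would use $\alpha = (1+\sqrt 5)/2$, $\beta = (1-\sqrt 5)/2$ with $\alpha\beta = -1$, so that $F_n = (\alpha^n - \beta^n)/(\alpha-\beta)$ and $L_n = \alpha^n + \beta^n$.

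First I would handle the $F_{3m}/F_m$ cases. Squaring $L_m$ directly gives
\[
L_m^2 = \alpha^{2m} + 2(\alpha\beta)^m + \beta^{2m} = L_{2m} + 2(-1)^m.
\]
On the other hand, factoring the difference of cubes yields
\[
\alpha^{3m} - \beta^{3m} = (\alpha^m - \beta^m)\bigl(\alpha^{2m} + (\alpha\beta)^m + \beta^{2m}\bigr),
\]
which after dividing by $\alpha - \beta$ becomes $F_{3m} = F_m\bigl(L_{2m} + (-1)^m\bigr)$, i.e.\ $F_{3m}/F_m = L_{2m} + (-1)^m$. Combining the two identities, when $m$ is even we get $F_{3m}/F_m = L_{2m} + 1 = L_m^2 - 1$, and when $m$ is odd we get $F_{3m}/F_m = L_{2m} - 1 = L_m^2 + 1$. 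This already settles the even branch of (i) and the odd branch of (ii).

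For the remaining two branches I would use the classical identity $L_m^2 - 5F_m^2 = 4(-1)^m$, which again follows in one line from Binet: $L_m^2 - 5F_m^2 = (\alpha^m+\beta^m)^2 - (\alpha^m-\beta^m)^2 = 4(\alpha\beta)^m = 4(-1)^m$. Thus
\[
L_m^2 + (-1)^{m+1} = 5F_m^2 + 5(-1)^m = 5\bigl(F_m^2 + (-1)^m\bigr) = 5F_{m-1}F_{m+1},
\]
where in the last step I invoked Lemma~\ref{lemma1} (equivalently, the $k=1$ case of \eqref{finaleq1}, which reads $F_{m-1}F_{m+1} = F_m^2 + (-1)^m$). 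When $m$ is odd this gives $L_m^2 - 1 = 5F_{m-1}F_{m+1}$, and when $m$ is even it gives $L_m^2 + 1 = 5F_{m-1}F_{m+1}$, completing the odd branch of (i) and the even branch of (ii).

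There is really no substantive obstacle: once one notices the clean factorization of $\alpha^{3m}-\beta^{3m}$ through $\alpha^m-\beta^m$, both parts reduce to one-line Binet computations plus Lemma~\ref{lemma1}. The only small thing to keep track of is the parity bookkeeping in $(-1)^m$, and matching each of the four resulting expressions with the correct branch of the piecewise formulas.
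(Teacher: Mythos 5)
Your proof is correct and takes essentially the same route as the paper, which disposes of this lemma with the remark that it ``can be checked easily using Binet's formula''; your three Binet identities $L_m^2 = L_{2m}+2(-1)^m$, $F_{3m}/F_m = L_{2m}+(-1)^m$, and $L_m^2-5F_m^2=4(-1)^m$ do exactly that and together cover all four branches. The only blemish is a sign slip in your final display: the left-hand side should be $L_m^2+(-1)^m$ rather than $L_m^2+(-1)^{m+1}$ (as written the display asserts $L_m^2+(-1)^{m+1}=5F_m^2+5(-1)^m$, whereas in fact $L_m^2+(-1)^{m+1}=5F_m^2+3(-1)^m$); since your parity conclusions in the following sentence already correspond to the corrected exponent, this is only a typo and nothing else changes.
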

\begin{proof}
Similar to \eqref{finaleq1}, this can be checked easily using Binet's formula. 
\end{proof}

\section{Main results}

Consider the equations \eqref{BReq1}, \eqref{BReq2}, \eqref{BReq3}, and \eqref{BReq4}. Since $F_0 = 0$, $F_1 = F_2 = 1$, and $L_1 = 1$, we avoid some trivial solutions when $k\geq 2$ by assuming $3\leq n_1 \leq n_2 \leq \cdots \leq n_k$ in \eqref{BReq1} and \eqref{BReq3} and assuming $n_j\neq 1$ for every $j=1, 2, \ldots, k$ in \eqref{BReq2} and \eqref{BReq4}. In addition some parts of \eqref{BReq1} and \eqref{BReq2} are already considered by Szalay in \cite{Sza}, so we begin by giving the detailed proof for the solutions to \eqref{BReq3} and \eqref{BReq4}. Then we give a short discussion for \eqref{BReq2} and \eqref{BReq1}. 
\subsection{The equation $F_{n_1}F_{n_2}F_{n_3}\cdots F_{n_k}\pm 1 = L_m^2$}  
\begin{theorem}\label{thm1}
The diophantine equation
\begin{equation}\label{thmdioeq1}
F_{n_1}F_{n_2}F_{n_3}\cdots F_{n_k}+1 = L_m^2
\end{equation}
with $m\geq 0$, $k\geq 1$, and $0\leq n_1\leq n_2\leq \cdots \leq n_k$ has a solution if and only if $m = 0, 2, 4$ or $m$ is odd. In these cases, the nontrivial solutions to \eqref{thmdioeq1} are given by 
\begin{align*}
F_4&+1 = L_0^2,\quad F_0+1 = L_1^2, \quad F_3^3+1 = F_6+1 = L_2^2,\quad F_4F_5+1 = L_3^2, \\
F_3^4&F_4+1 = F_3F_4F_6+1 = L_4^2,  \quad F_4F_5F_6 + 1 = F_3^3F_4F_5+1 = L_5^2,  \\
F_3^3&F_5F_8 + 1 = L_7^2,\quad F_3^4F_4^2F_5F_{10}+1 = F_3F_4^2F_5F_6F_{10}+1 = L_{11}^2,\\
F_3^4&F_4^2F_5F_{14}+1 = F_3F_4^2F_5F_6F_{14}+1 = L_{13}^2,
\end{align*}
and an infinite family of solutions: $F_5F_{m-1}F_{m+1}+1 = L_m^2$ for any odd number $m\geq 7$. Here nontrivial solutions means that either $k=1$ or $k\geq 2$ and $n_1\geq 3$.
\end{theorem}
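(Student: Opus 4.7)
The plan is to rewrite equation \eqref{thmdioeq1} as $F_{n_1}F_{n_2}\cdots F_{n_k} = L_m^2 - 1$ and apply Lemma \ref{lemma2}(i), so that the right side becomes $F_{3m}/F_m$ when $m$ is even and $5F_{m-1}F_{m+1} = F_5 F_{m-1} F_{m+1}$ when $m$ is odd. The main engine in both regimes will be Carmichael's primitive divisor theorem (Theorem \ref{carthm}), combined with the standard fact that a primitive divisor $p$ of $F_n$ satisfies $p\mid F_a$ if and only if $n\mid a$.

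For even $m\geq 6$, I expect no solutions: since $3m\geq 18$ and $3m\neq 12$, $F_{3m}$ has a primitive divisor $p$ whose rank is exactly $3m$; as $3m\nmid m$ we have $p\nmid F_m$, so $p$ divides $F_{3m}/F_m$ and hence the left side. Then $p\mid F_{n_j}$ for some $j$ forces $3m\mid n_j$, so $n_j\geq 3m$, which contradicts $F_{n_j}\leq F_{3m}/F_m < F_{3m}$. The cases $m=0,2,4$ reduce to enumerating Fibonacci factorizations of $3$, $8$, $48$ by hand; the double representation at $m=2$ comes from $F_6 = 8 = F_3^3$, and the one at $m=4$ from $48 = F_3F_4F_6 = F_3^4 F_4$.

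For odd $m$, I write $\prod_j F_{n_j} = F_5 F_{m-1} F_{m+1}$ and peel off indices from the top. Using Binet's formula one verifies $F_{2m+1} < 5F_{m-1}F_{m+1} < F_{2m+2}$ for all sufficiently large $m$, so $n_k\leq 2m+1$; any primitive divisor of $F_{n_k}$ with $n_k\in[m+2,2m+1]$ must then divide one of $F_5,F_{m-1},F_{m+1}$, forcing $n_k$ to divide $5$, $m-1$, or $m+1$, which is impossible in that range (after separately ruling out the Carmichael exceptions $n_k\in\{6,12\}$). Hence $n_k\leq m+1$. When $m+1\notin\{1,2,6,12\}$ the primitive divisor of $F_{m+1}$ must appear on the left, forcing some $n_j=m+1$; cancelling this factor and repeating with $F_{m-1}$ (valid when $m-1\notin\{1,2,6,12\}$) forces another index equal to $m-1$, leaving $\prod F_{n_i} = F_5$, which forces a single remaining index equal to $5$. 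This produces exactly the infinite family $F_5 F_{m-1}F_{m+1}+1 = L_m^2$.

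The main obstacle is the finite list of small odd values $m\in\{1,3,5,7,9,11,13\}$, precisely those for which $m-1$ or $m+1$ falls into the Carmichael-exceptional set $\{1,2,6,12\}$ or is too small to admit a primitive divisor, so the peeling argument either breaks down or admits alternative ways to absorb a divisor. These must be handled by direct factorization of $5F_{m-1}F_{m+1}$ subject to $n_j\geq 3$; the identities $F_6 = F_3^3$ and $F_{12} = F_3^4 F_4^2 = F_3 F_4^2 F_6$ are exactly what produce the extra representations listed for $m\in\{5,7,11,13\}$, while $m\in\{1,3,9\}$ contribute only their obvious solutions. A secondary technical point is to verify the inequality $F_{2m+1} < 5F_{m-1}F_{m+1} < F_{2m+2}$ effectively (not just asymptotically) from the relevant threshold onward so that the peeling step applies uniformly.
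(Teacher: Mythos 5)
Your proposal is correct and follows essentially the same route as the paper: apply Lemma \ref{lemma2}(i) to turn $L_m^2-1$ into $F_{3m}/F_m$ (even $m$) or $5F_{m-1}F_{m+1}$ (odd $m$), use Carmichael's primitive divisor theorem to force the large indices and extract the infinite family $F_5F_{m-1}F_{m+1}+1=L_m^2$, and finish the small $m$ by direct factorization. The only real difference is cosmetic: your auxiliary bound $F_{2m+1}<5F_{m-1}F_{m+1}<F_{2m+2}$ is unnecessary, since the rank-of-apparition argument already excludes every $n_k\geq m+2$ outside the Carmichael exceptions, which is exactly how the paper proceeds.
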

\begin{proof}
\textbf{Case 1} $m$ is even. Suppose for a contradiction that there exists $m\geq 5$ satisfying \eqref{thmdioeq1}. By Lemma \ref{lemma2}(i), we can write \eqref{thmdioeq1} as
\begin{equation}\label{thmdioeq2}
F_{n_1}F_{n_2}F_{n_3}\cdots F_{n_k}F_m = F_{3m}.
\end{equation}
By Theorem \ref{carthm}, if $3m > n_k$, then there exists a prime $p$ dividing $F_{3m}$ but does not divide any term on the left hand side of \eqref{thmdioeq2}. Similarly, if $3m < n_k$, there exists a prime $p\mid F_{n_k}$ but $p\nmid F_{3m}$, which is not the case. Hence $3m = n_k$. We remark that this kind of argument will be used repeatedly throughout the rest of this article. Then \eqref{thmdioeq2} is reduced to 
$$
F_{n_1}F_{n_2}F_{n_3}\cdots F_{n_{k-1}}F_m = 1.
$$
Then $1 = F_{n_1}F_{n_2}F_{n_3}\cdots F_{n_{k-1}}F_m \geq F_m \geq F_5 \geq 5$, which is a contradiction. Therefore $m\leq 4$. Now it is straightforward to check all values of $L_m^2-1$ for $m = 0, 2, 4$ and write it as a product of Fibonacci numbers. This leads to the solutions given by
$$
F_4+1 = L_0^2, \quad F_3^3+1 = F_6+1 = L_2^2, \quad F_3^4F_4+1 = F_3F_4F_6+1 = L_4^2.
$$
\noindent \textbf{Case 2} $m$ is odd. Then by Lemma \ref{lemma2}(i), we can write \eqref{thmdioeq1} as 
\begin{equation}\label{thmdioeq3}
F_{n_1}F_{n_2}F_{n_3}\cdots F_{n_k} = 5F_{m-1}F_{m+1}.
\end{equation}
Suppose first that $m\geq 14$. Then by Theorem \ref{carthm} and the same argument used in Case 1, we have $m+1 = n_k$ and \eqref{thmdioeq3} is reduced to 
\begin{equation}\label{thmdioeq3.55}
F_{n_1}F_{n_2}F_{n_3}\cdots F_{n_{k-1}} = 5F_{m-1}.
\end{equation}
This implies $k\geq 2$. Again by Theorem \ref{carthm}, $m-1 = n_{k-1}$ and \eqref{thmdioeq3.55} becomes 
$$
F_{n_1}F_{n_2}F_{n_3}\cdots F_{n_{k-2}} = 5.
$$
This implies that $k=3$ and $F_{n_1} = 5 = F_5$. In this case, we obtain an infinite number of solutions given by  
\begin{equation}\label{thmdioeq3.5}
F_5F_{m-1}F_{m+1} + 1 = L_m^2\quad\text{with $m\geq 14$ and $m$ is odd}.
\end{equation}
By Lemma \ref{lemma2}(i), we see that \eqref{thmdioeq3.5} also holds for any odd number $m\geq 3$. So we only need to check for the other factorizations of $L_m^2-1$ ($m$ odd and $m\leq 15$) as product of Fibonacci numbers. This leads to the other solutions to \eqref{thmdioeq1} as given in the statement of the theorem. 
\end{proof}
\begin{theorem}\label{thm6}
The diophantine equation
\begin{equation}\label{thm6eq1}
F_{n_1}F_{n_2}F_{n_3}\cdots F_{n_k} - 1 = L_m^2
\end{equation}
with $m\geq 0$, $k\geq 1$, and $0\leq n_1 \leq n_2 \leq \cdots \leq n_k$ has a solution if and only if $m=1$ or $m$ is even. In these cases, the nontrivial solutions to \eqref{thm6eq1} are given by 
$$
F_3-1 = L_1^2,\quad F_5-1 = L_0^2,\quad  F_3F_5-1 = L_2^2, \quad F_3F_5F_5-1 = L_4^2,
$$
 and an infinite famility of solutions
$$
\text{$F_5F_{m-1}F_{m+1}-1 = L_m^2$ for every even number $m\geq 6$.}
$$ 
Here nontrivial solutions means that either $k=1$ or $k\geq 2$ and $n_1\geq 3$. 
\end{theorem}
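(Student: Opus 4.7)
The plan is to mirror the proof of Theorem \ref{thm1}, swapping the roles of the two branches of Lemma \ref{lemma2}: after rewriting \eqref{thm6eq1} as $F_{n_1}F_{n_2}\cdots F_{n_k}=L_m^2+1$, I invoke Lemma \ref{lemma2}(ii) instead of (i). I then split the analysis according to the parity of $m$.

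In Case 1 ($m$ odd), Lemma \ref{lemma2}(ii) converts the equation into $F_{n_1}\cdots F_{n_k}\,F_m=F_{3m}$. For odd $m\geq 3$ the index $3m\geq 9$ avoids the exceptional set $\{1,2,6,12\}$ of Theorem \ref{carthm}, so comparing the largest indices on each side via primitive divisors (exactly as in Theorem \ref{thm1}, Case 1) forces $n_k=3m$; the equation then collapses to $F_{n_1}\cdots F_{n_{k-1}}\,F_m=1$, which is impossible because every nontrivial factor is at least $2$. Hence the only odd $m$ that survives is $m=1$, yielding the sporadic solution $F_3-1=L_1^2$.

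In Case 2 ($m$ even), Lemma \ref{lemma2}(ii) gives $F_{n_1}\cdots F_{n_k}=5F_{m-1}F_{m+1}$. For sufficiently large even $m$ (say $m\geq 14$), both $m-1$ and $m+1$ lie outside $\{1,2,6,12\}$, so a first application of the primitive divisor argument peels off $n_k=m+1$ (the $p=5$ escape is blocked because $5$ is a primitive divisor only of $F_5$), and a second application peels off $n_{k-1}=m-1$, leaving $F_{n_1}\cdots F_{n_{k-2}}=5$. The unique way to write $5$ as a product of Fibonacci numbers of index $\geq 3$ is $5=F_5$, forcing $k=3$ and $n_1=5$; this produces the infinite family $F_5F_{m-1}F_{m+1}-1=L_m^2$, and Lemma \ref{lemma2}(ii) in fact confirms this identity for every even $m\geq 6$. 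For the remaining small even values $m\in\{0,2,4\}$ I directly enumerate Fibonacci factorizations of $L_m^2+1\in\{5,10,50\}$, which immediately yields the three sporadic solutions listed in the theorem.

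The main obstacle will be administrative rather than conceptual: I must confirm that within the finite range $m\leq 12$ no competing Fibonacci factorizations of $L_m^2+1$ exist beyond the ones produced by Lemma \ref{lemma2}(ii), especially when one of the indices $n_k$, $m\pm 1$, or $3m$ happens to land in $\{6,12\}$, where the clean primitive-divisor peeling breaks down. Because the numerics involved are very small, this reduces to a short list of direct prime-factorization checks, entirely parallel to the finite check performed at the end of Theorem \ref{thm1}.
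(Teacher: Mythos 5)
Your proposal matches the paper's proof in both structure and substance: the paper likewise reduces the odd-$m$ case to $F_{n_1}\cdots F_{n_k}F_m=F_{3m}$ and the even-$m$ case to $F_{n_1}\cdots F_{n_k}=5F_{m-1}F_{m+1}$ via Lemma \ref{lemma2}(ii), then reuses the primitive-divisor peeling from Cases 1 and 2 of Theorem \ref{thm1} together with a finite check of small $m$. Your version is in fact slightly more explicit than the paper's (which only sketches the reduction), and the cautions you raise about indices landing in $\{6,12\}$ are moot here since $m\pm1$ and $3m$ are always odd in the relevant cases.
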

\begin{proof}
The proof of this theorem is very similar to that of Theorem \ref{thm1}. So we only give a brief discussion. If $m$ is odd, then we apply Lemma \ref{lemma2}(ii) to write \eqref{thm6eq1} as
$$
F_{n_1}F_{n_2}F_{n_3}\cdots F_{n_k}F_m = F_{3m}.
$$
From this point, we can follow the proof of Case 1 in Theorem \ref{thm1} and obtain the solutions given by $F_3-1 = L_1^2$. If $m$ is even, we apply Lemma \ref{lemma2}(ii) to write \eqref{thm6eq1} as $F_{n_1}F_{n_2}F_{n_3}\cdots F_{n_k} = 5F_{m-1}F_{m+1}$. Then we follow the proof of Case 2 in Theorem \ref{thm1} to obtain the desired result. 
\end{proof}

\subsection{The equation $L_{n_1}L_{n_2}L_{n_3}\cdots L_{n_k}\pm 1 = F_m^2$}  

\begin{theorem}\label{thm2.4lm=f}
The diophantine equation
\begin{equation}\label{thmdioeq4}
L_{n_1}L_{n_2}L_{n_3}\cdots L_{n_k}+1 = F_m^2
\end{equation}
with $m\geq 0$, $k\geq 1$, and $0\leq n_1\leq n_2 \leq \cdots \leq n_k$ has a solution if and only if $3\leq m\leq 7$, $m=10$, or $m = 14$. More precisely, the nontrivial solutions to \eqref{thmdioeq4} are given by 
\begin{align*}
L_2&+1 = F_3^2,\quad L_0^3+1 = L_0L_3+1 = F_4^2,\quad L_0^3L_2+1 = L_0L_2L_3+1 = F_5^2,\\
L_2^2&L_4+1 = F_6^2,\quad L_0^3L_2L_4+1 = L_0L_2L_3L_4+1 = F_7^2, \quad A+1 = F_{10}^2, \\
A&L_8 +1 = F_{14}^2,
\end{align*}
where $A = L_0^4L_2^3L_4 = L_0^3L_2L_4L_6 = L_0^2L_2^3L_3L_4 = L_0L_2L_3L_4L_6 = L_2^3L_3^2L_4$. Here nontrivial solutions means that either $k=1$ or $k\geq 2$ and $n_j\neq 1$ for any $j \in\{1, 2, \ldots, k\}$.
\end{theorem}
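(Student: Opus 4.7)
The strategy mirrors the proofs of Theorems \ref{thm1} and \ref{thm6}. First, I would split according to the parity of $m$ and apply Lemma \ref{lemma1}(i) to rewrite $F_m^2 - 1$, giving
\[ L_{n_1}\cdots L_{n_k} = F_{m-1}F_{m+1} \quad (m \text{ odd}), \qquad L_{n_1}\cdots L_{n_k} = F_{m-2}F_{m+2} \quad (m \text{ even}). \]
The new difficulty, compared to Theorem \ref{thm1}, is that the left side consists of Lucas numbers while the right side consists of Fibonacci numbers, so Carmichael's theorem cannot be applied to both sides directly.

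To turn this into a pure Fibonacci identity, I would set aside the factors equal to $L_0 = 2$: let $a \geq 0$ denote their multiplicity and use $L_0 = F_3 = 2$. For the remaining factors $L_{n_j}$ with $n_j \geq 2$, I would multiply both sides by $\prod_{n_j \geq 2} F_{n_j}$ and apply the identity $F_n L_n = F_{2n}$ (valid for $n \geq 1$) to obtain, for example in the odd case,
\[ F_3^a \prod_{n_j \geq 2} F_{2n_j} = F_{m-1} F_{m+1} \prod_{n_j \geq 2} F_{n_j}, \]
and analogously with $F_{m-2}F_{m+2}$ on the right for $m$ even. Both sides are now products of Fibonacci numbers, so Theorem \ref{carthm} applies.

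Next I would iterate the Carmichael argument. Letting $n_k$ denote the largest Lucas index, the largest Fibonacci index on the left is $2n_k$ and on the right is $m+1$ (odd case) or $m+2$ (even case), assuming $m$ is large. Comparing primitive divisors forces $2n_k = m+1$ or $m+2$. Canceling that factor and repeating forces $2n_{k-1} = m-1$ or $m-2$. A third iteration either yields a parity obstruction (since $2n_{k-2}$ would have to equal something like $(m+1)/2$, constraining $m$ modulo $4$) or, when the iteration terminates because $k-a$ is small, an equation of the form $2^a = F_c F_d$ with small $c,d$, which bounds $m$.

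The main obstacle is the Carmichael exception at $n = 12$, combined with the fact that $47 = L_8$ is itself a Lucas number and is a primitive divisor of $F_{16}$; these are precisely the features that let the sporadic solutions at $m = 10$ (where $m+2 = 12$) and $m = 14$ (where $m+2 = 16$ brings $L_8$ into play) exist. One must track these exceptions carefully through each cancellation step and verify that no analogous sporadic solution occurs at larger $m$ (e.g.\ $m = 22$, where $m+2 = 24 = 2\cdot 12$ again involves the exception). Once the iterative argument bounds $m \leq 14$, the proof concludes by listing all Lucas-number factorizations of $F_m^2 - 1$ for each $m \in \{3,4,5,6,7,10,14\}$, matching them to the statement, and checking that no solutions occur for the remaining small values of $m$.
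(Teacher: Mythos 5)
Your reduction is essentially the paper's: rewrite $F_m^2-1$ via Lemma \ref{lemma1}(i), convert each $L_{n_j}$ with $n_j\geq 2$ into $F_{2n_j}/F_{n_j}$, and iterate Theorem \ref{carthm} to force $2n_k=m+1$ (or $m+2$), then $2n_{k-1}=m-1$ (or $m-2$), and so on until a divisibility contradiction appears. (Minor point: in the even case $4\mid(m+2)-(m-2)=4$ is not yet a contradiction, so one must run the iteration one round longer to reach $8\mid 4$.) The genuine gap is in what you claim this buys you. Each cancellation step is legitimate only while the Fibonacci indices being compared stay above $12$ (the Carmichael exception), so the iteration only excludes $m\geq 27$ odd and $m\geq 54$ even; it does \emph{not} bound $m\leq 14$. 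You are therefore left with all $m$ up to $52$, and ``listing all Lucas-number factorizations of $F_m^2-1$'' for, say, $m=36$ or $m=50$ is not a feasible hand computation; you offer no criterion for showing those intermediate values give no solution.

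The paper closes exactly this gap with an ingredient absent from your proposal: if $p$ is an odd prime whose rank of apparition $z(p)$ in the Fibonacci sequence is odd, then $p$ divides no Lucas number. Computing the period of $F_m$ modulo $5$, $13$, $17$ shows $F_m^2-1$ is divisible by one of these primes for all $m$ in the range except $3\leq m\leq 7$, $m=10,14$ and the stragglers $m=23,24,25,36,46,50$, which are then killed by $p=89,37,233$ (all with odd $z(p)$). Without this, or an explicit computer search over $m\leq 52$, $n_k\leq 104$, $k\leq \log(F_{52}^2-1)/\log 2$, your finite-verification step is incomplete. Your observations that the sporadic solutions at $m=10$ and $m=14$ trace back to $F_{12}=2^4\cdot 3^2$ and to $L_8=47$ being the primitive divisor of $F_{16}$ are correct and illuminating, but the worry about $m=22$ is misplaced: $F_{24}$ does have a primitive divisor, and the real obstruction at moderate $m$ is simply that the iteration stalls once an index drops to $12$ or below.
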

\begin{proof}
By Lemma \ref{lemma1}(i), we can rewrite \eqref{thmdioeq4} as 
\begin{equation}\label{newthm3.3eq1}
L_{n_1}L_{n_2}L_{n_3}\cdots L_{n_k} = F_aF_b
\end{equation}
where $a, b\in \{m-1, m+1\}$ or $a, b\in \{m-2,m+2\}$. Suppose that every $n_1, n_2, \ldots, n_k$ is zero. Then \eqref{newthm3.3eq1} becomes
\begin{equation}\label{newthm3.3eq2}
2^k = F_aF_b.
\end{equation}
By Theorem \ref{carthm} and the fact that $2\mid F_3$ and $3\mid F_{12}$, we see that $F_n$ has a prime divisor distinct from $2$ for every $n\neq 1,2,3,6$. So \eqref{newthm3.3eq2} implies that $a,b\in \{1,2,3,6\}$. Checking all possible choices, we see that the only solutions to \eqref{thmdioeq4} in this case is given by $L_0^3+1 = F_4^2$. Since $L_1 = 1$, we easily see that the case $n_j = 1$ for every $j$ does not give a solution. Similarly, $m=0,1,2$ does not lead to a solution. From this point on, we assume that there exists $j \in \{1, 2, \ldots, k\}$ such that $n_j\geq 2$, $n_i\neq 1$ for any $i$, and $m\geq 3$. Let $n_\ell$ be the smallest positive integer among $n_1, n_2, \ldots, n_k$. So $n_\ell \geq 2$ and $n_1, n_2, \ldots, n_{\ell-1} = 0$. 

\noindent\textbf{Case 1} $m$ is odd and $m\geq 27$. By Lemma \ref{lemma1}(i) and the identity $F_{2n} = F_nL_n$, which holds for $n\geq 1$, we can write \eqref{thmdioeq4} as 
\begin{equation}\label{thmdioeq5}
2^{\ell-1}\frac{F_{2n_\ell}}{F_{n_\ell}}\frac{F_{2n_{\ell+1}}}{F_{n_{\ell+1}}}\cdots \frac{F_{2n_k}}{F_{n_k}} = F_{m-1}F_{m+1}.
\end{equation}
By Theorem \ref{carthm}, we obtain $m+1 = 2n_k$ and \eqref{thmdioeq5} is reduced to 
\begin{equation}\label{thmdioeq6}
2^{\ell-1}\frac{F_{2n_\ell}}{F_{n_\ell}}\frac{F_{2n_{\ell+1}}}{F_{n_{\ell+1}}}\cdots \frac{F_{2n_{k-1}}}{F_{n_{k-1}}} = F_{n_k}F_{m-1}.
\end{equation} 
Note that if $m-1 < 2n_{k-1}$, then $n_k = \frac{m+1}{2} < m-1 < 2n_{k-1}$. So by applying Theorem \ref{carthm} to \eqref{thmdioeq6}, we obtain $m-1 = 2n_{k-1}$ and \eqref{thmdioeq6} is reduced to
\begin{equation}\label{thmdioeq6.5}
2^{\ell-1}\frac{F_{2n_\ell}}{F_{n_\ell}}\frac{F_{2n_{\ell+1}}}{F_{n_{\ell+1}}}\cdots \frac{F_{2n_{k-2}}}{F_{n_{k-2}}} = F_{n_{k-1}}F_{n_k}.
\end{equation}
Since $n_k\geq n_{k-1} = \frac{m-1}{2}\geq 13$, we can apply Theorem \ref{carthm} to \eqref{thmdioeq6.5} and repeat the above argument to obtain 
$$
\text{$n_k = 2n_{k-2}$ and $n_{k-1} = 2n_{k-3}$.}
$$
Then $m+1 = 2n_k = 4n_{k-2}$ and $m-1 = 2n_{k-1} = 4n_{k-3}$, and therefore $m+1$ and $m-1$ are divisible by $4$. So $4\mid (m+1)-(m-1) = 2$, a contradiction. Hence there is no solution in this case.

\noindent \textbf{Case 2} $m$ is even and $m\geq 54$. This case is similar to Case 1. We apply Lemma \ref{lemma1}(i) and the identity $F_{2n} = F_nL_n$ to write \eqref{thmdioeq4} in the form
\begin{equation}\label{thmdioeq7}
2^{\ell-1}\frac{F_{2n_\ell}}{F_{n_\ell}}\frac{F_{2n_{\ell+1}}}{F_{n_{\ell+1}}}\cdots \frac{F_{2n_k}}{F_{n_k}} = F_{m-2}F_{m+2}.
\end{equation}
Then we apply Theorem \ref{carthm} repeatedly to obtain 
\begin{align*}
m+2 &= 2n_k,\quad m-2 = 2n_{k-1}, \quad n_{k} = 2n_{k-2},\\
n_{k-1} &= 2n_{k-3}, \quad n_{k-2} = 2n_{k-4},\quad \text{and $n_{k-3} = 2n_{k-5}$}.
\end{align*}
Note that we can repeat this process as long as the indices of the Fibonacci numbers appearing on the right hand side of the equation are larger than 12. Here $n_{k-3} = \frac{n_{k-1}}{2} = \frac{m-2}{4} \geq 13$. So the above argument is justified. This leads to 
\begin{align*}
m+2 &= 2n_{k} = 4n_{k-2} = 8n_{k-4}\;\; \text{and}\\
m-2 &= 2n_{k-1} = 4n_{k-3} = 8n_{k-5}.
\end{align*}
Therefore $8\mid m+2$ and $8\mid m-2$. So $8\mid (m+2)-(m-2) = 4$, a contradiction. So there is no solution in this case.

\noindent From Case 1 and Case 2, we only need to consider the following:
\begin{equation}\label{thmdioeq7.1}
\text{$m$ is odd and $3\leq m\leq 25$},
\end{equation}
\begin{equation}\label{thmdioeq7.2}
\text{$m$ is even and $3\leq m\leq 52$}.
\end{equation}
Since $L_{n_k} \leq L_{n_1}L_{n_2}\cdots L_{n_k} = F_m^2-1 \leq L_{2m}$, we have $n_k \leq 2m \leq 104$. In addition, $2^k \leq L_{n_1}^k \leq L_{n_1}L_{n_2}\cdots L_{n_k} = F_m^2-1 \leq F_{52}^2-1$. So $k\leq \frac{\log (F_{52}^2-1)}{\log 2}$. So we only need to find the solutions to \eqref{thmdioeq4} in the range $1\leq k \leq \frac{\log (F_{52}^2-1)}{\log 2}$, $3\leq m \leq 52$, and $0 \leq n_1 \leq n_2 \leq \cdots \leq n_k\leq 104$. Since this is only a finite number of cases, it can be verified using computer programming. However, we think that checking it by hand does not take too much time. So we offer here a proof which does not require a high technology in computer programming.

Recall that for each positive integer $n$, the order of appearance of $n$ in the Fibonacci sequence, denoted by $z(n)$, is the smallest positive integer $k$ such that $n\mid F_k$. It is a well known fact that if $p$ is an odd prime and $z(p)$ is odd, then $p\nmid L_n$ for any $n\geq 0$. We refer the reader to Lemma 2.1 of Ward \cite{War} for a proof and other theorems in \cite{War} for related results. Since $z(5) = 5$, $z(13) = 7$, and $z(17) = 9$ are odd, the Lucas numbers are not divisible by any of $5$, $13$, and $17$. Here the calculation of $z(p)$ (for $p = 5, 13, 17$) is straightforward or it can be looked up in the Fibonacci Tables compiled by Brother A. Brousseau and distributed online by the Fibonacci Association \cite{Fib}. 

Next it is easy to calculate the period of $F_m$ modulo $5$, and $F_m$ modulo $13$. Again, this can also be looked up in the Fibonacci Tables \cite{Fib}. Then we see that 
\begin{equation}\label{thmdioeq7.3}
\text{$5\mid F_m^2-1$ when $m\equiv 1, 2, 8, 9, 11, 12, 18, 19\pmod{20}$}
\end{equation}
and
\begin{equation}\label{thmdioeq7.4}
\text{$13\mid F_m^2-1$ when $m\equiv 1, 2, 12, 13, 15, 16, 26, 27 \pmod{28}$}.
\end{equation}
Since Lucas numbers are not divisible by either $5$ or $13$, we see that if $m$ is in the residue classes given in \eqref{thmdioeq7.3} or \eqref{thmdioeq7.4}, then $F_m^2-1$ is not a product of Lucas numbers. Similarly, $F_m^2-1$ is not a product of Lucas numbers if $17\mid F_m^2-1$ which occurs when 
\begin{equation}\label{thmdioeq7.5}
m\equiv 1, 2, 16, 17, 19, 20, 34, 35\pmod{36}.
\end{equation}
So we eliminate those $m$ in \eqref{thmdioeq7.1} and \eqref{thmdioeq7.2} satisfying \eqref{thmdioeq7.3}, \eqref{thmdioeq7.4}, or \eqref{thmdioeq7.5}. At this point, we only need to consider $F_m^2-1$ in the following cases:
\begin{itemize}
\item[(i)] $3\leq m\leq 7$, $m = 10, 14$.
\item[(ii)] $23\leq m \leq 25$, $m = 36, 46, 50$.
\end{itemize}
By looking up the Fibonacci Tables \cite{Fib}, we see that $z(89) = 11$, $z(37) = 19$, $z(233) = 13$, which are odd numbers, and 
\begin{align*}
89&\mid F_m^2-1\quad\text{when $m = 23, 24, 46$},\\
37&\mid F_{m}^2-1\quad\text{when $m = 36$},\\
233&\mid F_m^2-1\quad \text{when $m = 25, 50$}.
\end{align*}
So those $m$ in (ii) does not give a solution to \eqref{thmdioeq4}. Now we only have a small number of $m$ in (i), which can be easily checked by hand. Each value of $m$ in (i) leads to a solution to \eqref{thmdioeq4}. This completes the proof. 
\end{proof}
\begin{theorem}
The diophantine equation
\begin{equation}\label{eqdiothm8}
L_{n_1}L_{n_2}L_{n_3}\cdots L_{n_k}-1 = F_m^2
\end{equation}
with $m\geq 0$, $k\geq 1$, and $0\leq n_1\leq n_2 \leq \cdots \leq n_k$ has a solution if and only if $0\leq m\leq 2$. In fact, the nontrivial solutions to \eqref{eqdiothm8} are given by $L_1-1 = F_0^2$, $L_0-1 = F_1^2$, and $L_0-1 = F_2^2$. Here nontrivial solutions means that either $k=1$ or $k\geq 2$ and $n_j\neq 1$ for any $j \in\{1, 2, \ldots, k\}$. 
\end{theorem}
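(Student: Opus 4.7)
The plan is to exploit the favorable factorization of $F_m^2+1$ provided by Lemma \ref{lemma1}(ii) together with the Ward-type fact cited in the proof of Theorem \ref{thm2.4lm=f}. The case $m=0$ is handled directly: $F_0^2+1=1$ forces $L_{n_1}\cdots L_{n_k}=1$, so every $n_i$ must equal $1$, yielding only the stated solution $L_1-1=F_0^2$. So assume $m\geq 1$. Applying Lemma \ref{lemma1}(ii) rewrites \eqref{eqdiothm8} as
$$L_{n_1}L_{n_2}\cdots L_{n_k} = F_aF_b,$$
where $(a,b)=(m-1,m+1)$ if $m$ is even and $(a,b)=(m-2,m+2)$ if $m$ is odd. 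The decisive feature is that in both cases the indices $a$ and $b$ are odd.

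Set $j=\max(a,b)$, so $j=m+1$ or $j=m+2$. If $j\geq 5$, then by Theorem \ref{carthm} the Fibonacci number $F_j$ has a primitive divisor $p$ with $z(p)=j$. Since $z(2)=3$ and $j\geq 5$, the prime $p$ is odd, and $z(p)=j$ is odd by construction. By the fact that an odd prime $p$ with $z(p)$ odd divides no Lucas number, we get $p\nmid L_{n_1}\cdots L_{n_k}$, which contradicts $p\mid F_j\mid F_aF_b$.

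Hence $j\leq 3$, which restricts $m$ to $\{1,2\}$. In both of these cases the equation becomes $L_{n_1}\cdots L_{n_k}=2$; under the nontriviality hypothesis that no $n_i$ equals $1$, this forces $k=1$ and $n_1=0$, recovering the two remaining listed solutions $L_0-1=F_1^2$ and $L_0-1=F_2^2$.

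The only subtle point I anticipate is the boundary case $j=3$, where the primitive divisor of $F_3$ is $p=2$ and Ward's lemma offers no obstruction because $2\mid L_0$. This is precisely why the threshold must be $j\geq 5$ rather than $j\geq 3$, and why the three solutions with $m\in\{0,1,2\}$ survive. No iterated applications of Carmichael's theorem, and no $2$-adic parity contradiction of the kind appearing in the proof of Theorem \ref{thm2.4lm=f}, are required here; the asymmetry between $F_m^2-1$ (which splits as a product of Fibonacci numbers of mixed parity) and $F_m^2+1$ (which splits purely into odd-indexed factors) is what makes this theorem substantially easier than its companion.
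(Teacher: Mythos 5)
Your proof is correct, and it takes a genuinely different and more economical route than the paper's. The paper splits into three stages: it first rewrites each $L_{n_i}$ as $F_{2n_i}/F_{n_i}$ and applies Carmichael's theorem to force $m+1=2n_k$ (respectively $m+2=2n_k$) for $m$ even $\geq 12$ (respectively odd $\geq 11$), deriving a parity contradiction; it then disposes of the remaining range $3\leq m\leq 10$ by checking that each such $F_m^2+1$ is divisible by one of $5$, $13$, $17$, none of which divides a Lucas number; finally it checks $m\leq 2$ directly. Your argument collapses the first two stages into one: since $F_m^2+1$ factors into Fibonacci numbers with \emph{odd} indices, for $m\geq 3$ the larger index $j\in\{m+1,m+2\}$ is odd and at least $5$, so $j\notin\{1,2,6,12\}$ and $F_j$ has a primitive divisor $p$; that $p$ is odd (since $z(2)=3<j$) with $z(p)=j$ odd, so Ward's lemma says $p$ divides no Lucas number, an immediate contradiction. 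This is uniform in $m$, needs Carmichael only once, and eliminates the finite computation with $5$, $13$, $17$ entirely; the trade-off is that it leans on the observation that a primitive divisor of $F_j$ automatically has $z(p)=j$, which the paper uses only implicitly. One small slip in your closing remark: $F_m^2-1$ splits into \emph{even}-indexed Fibonacci factors (not ``mixed parity''), and it is precisely the identity $F_{2t}=F_tL_t$ for those even indices that lets Lucas factors sneak in and makes the companion $+1$ theorem harder; this does not affect the validity of your proof of the present statement.
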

\begin{proof}
The proof of this theorem is similar to that of Theorem \ref{thm2.4lm=f}. We first consider the case $n_1 = n_2= \cdots = n_k\in\{0,1\}$ and obtain the solutions given by $L_0-1 = F_1^2$, $L_0-1 = F_2^2$, and $L_1-1 = F_0^2$. Next if $m$ is even and $m \geq 12$, we follow the argument used in Case 1 of Theorem \ref{thm2.4lm=f} to write \eqref{eqdiothm8} as 
$$
2^{\ell-1}\frac{F_{2n_\ell}}{F_{n_\ell}}\frac{F_{2n_{\ell+1}}}{F_{n_{\ell+1}}}\cdots \frac{F_{2n_k}}{F_{n_k}} = F_{m-1}F_{m+1},
$$
where $\ell$ is defined in exactly the same way as that in Theorem \ref{thm2.4lm=f}.

Now the argument is a bit easier than that in Theorem \ref{thm2.4lm=f}. We see that Theorem \ref{carthm} forces $m+1 = 2n_k$, which contradicts the fact that $m$ is even. So there is no solution in this case. Similarly, there is no solution in the case that $m$ is odd and $m\geq 11$. Therefore we only need to consider the case $m\leq 10$. It is easy to check that
\begin{align*}
5\mid F_m^2+1 &\quad\text{if $m = 3,4,6,7$}\\
13\mid F_m^2+1 &\quad\text{if $m = 5,6,8,9$}\\
17\mid F_m^2+1 &\quad\text{if $m = 7,8,10$}.
\end{align*} 
Since Lucas numbers are not divisible by any of $5$, $13$, and $17$, we see that $F_m^2+1$ is not a product of Lucas numbers when $3\leq m\leq 10$. So we eliminate those $m$ and consider only $m = 0, 1, 2$ which lead to the solutions already obtained. This completes the proof. 
\end{proof}
\subsection{The equation $L_{n_1}L_{n_2}L_{n_3}\cdots L_{n_k}\pm 1 = L_m^2$}  
\begin{theorem}\label{thm3.9}
The diophantine equation
\begin{equation}\label{thm3.9eq1}
L_{n_1}L_{n_2}L_{n_3}\cdots L_{n_k}-1 = L_m^2
\end{equation}
with $m\geq 0$, $k\geq 1$, and $0\leq n_1\leq n_2 \leq \cdots \leq n_k$ has a solution if and only if $m=1$. The nontrivial solution to \eqref{thm3.9eq1} is given by $L_0-1 = L_1^2$. Here nontrivial solutions means that either $k=1$ or $k\geq 2$ and $n_j\neq 1$ for any $j \in \{1, 2, \ldots, k\}$.
\end{theorem}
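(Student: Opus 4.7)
The plan is to adapt the strategy of the preceding theorems: rewrite $L_m^2+1$ using Lemma \ref{lemma2}(ii) and then apply Carmichael's primitive divisor theorem together with the Ward-type observation (already invoked in the proof of Theorem \ref{thm2.4lm=f}) that an odd prime $p$ with $z(p)$ odd cannot divide any Lucas number. I would first dispatch $m=1$ by hand: $L_1^2+1=2=L_0$ gives the stated nontrivial solution, and no other representation of $2$ as a product of Lucas numbers with $n_j\neq 1$ is available (the admissible Lucas numbers are $L_0=2,L_2=3,L_3=4,\ldots$).

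For $m$ even I would use Lemma \ref{lemma2}(ii) to write $L_m^2+1=5F_{m-1}F_{m+1}$ whenever $m\geq 2$; since $z(5)=5$ is odd, Ward's lemma forbids $5$ from dividing any Lucas number, so the left-hand side of \eqref{thm3.9eq1} is never divisible by $5$ and no solution exists. The exceptional case $m=0$ reduces to $\prod L_{n_i}=5$, which is ruled out because no admissible Lucas number divides $5$.

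For $m$ odd with $m\geq 3$, Lemma \ref{lemma2}(ii) rewrites \eqref{thm3.9eq1} as
$$F_m\cdot L_{n_1}L_{n_2}\cdots L_{n_k}=F_{3m}.$$
Since $3m\geq 9$ and $3m\neq 12$, Theorem \ref{carthm} supplies a primitive prime divisor $p$ of $F_{3m}$ with $z(p)=3m$; because $z(2)=3\neq 3m$ this $p$ is odd. On one hand $p\nmid F_m$ because $z(p)=3m\nmid m$; on the other hand $p\nmid L_{n_j}$ for every $j$, since for $n_j=0$ we have $L_0=2$ and $p$ is odd, while for $n_j\geq 2$ Ward's lemma applies to the odd value $z(p)=3m$. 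Hence $p$ fails to divide the left-hand side, a contradiction.

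The main delicacy is verifying that $p\nmid L_{n_j}$ in the odd case. This rests on the identity $L_n=F_{2n}/F_n$ (valid for $n\geq 1$) and $\gcd(L_n,F_n)\mid 2$: together these reduce "$p\mid L_n$" with $p$ odd to "$z(p)\mid 2n$ and $z(p)\nmid n$", forcing $z(p)$ to be even, which is incompatible with $z(p)=3m$ being odd. Beyond this parity bookkeeping the argument is significantly shorter than that of Theorem \ref{thm2.4lm=f}, because no iterated application of Carmichael's theorem to doubled indices is required.
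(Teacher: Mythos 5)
Your proposal is correct and follows the paper's strategy in all essentials: the case split via Lemma \ref{lemma2}(ii), killing even $m$ by the fact that $5$ divides no Lucas number, and killing odd $m\geq 3$ by applying Theorem \ref{carthm} to $F_{3m}\,=\,F_m\prod L_{n_j}$. The only difference is in how the odd case concludes: the paper forces $2n_k=3m$ via the rewriting $L_n=F_{2n}/F_n$ and gets an even-equals-odd contradiction, while you apply Ward's lemma directly to the primitive divisor $p$ of $F_{3m}$ (whose rank $z(p)=3m$ is odd, so $p$ divides no Lucas number) --- an equivalent, slightly cleaner route that also absorbs the $m=3$ check into the general argument.
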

\begin{proof}
We follow the argument in the proof of Theorem \ref{thm2.4lm=f} and let $\ell$ be defined in the same way. If $m$ is even, then by Lemma \ref{lemma2}(ii), we can write \eqref{thm3.9eq1} as
\begin{equation}\label{thm3.9eq3}
L_{n_1}L_{n_2}L_{n_3}\cdots L_{n_k} = 5F_{m-1}F_{m+1}.
\end{equation} 
Since $5$ does not divide any Lucas number, \eqref{thm3.9eq3} is impossible. So there is no solution in this case. Suppose $m$ is odd and $m\geq 5$. We apply Lemma \ref{lemma2}(ii) to write \eqref{thm3.9eq1} as 
\begin{equation}\label{thm3.9eq2}
2^{\ell-1}\frac{F_{2n_\ell}}{F_{n_\ell}}\frac{F_{2n_{\ell+1}}}{F_{n_{\ell+1}}} \cdots \frac{F_{2n_k}}{F_{n_k}}F_m = F_{3m}.
\end{equation}
Then from \eqref{thm3.9eq2} and Theorem \ref{carthm}, we obtain $3m = 2n_k$, which contradicts the fact that $m$ is odd. Therefore we only need to consider $m = 1, 3$ which can be easily checked. So the proof is complete. 
\end{proof}
\begin{theorem}\label{thm3.103.6paper}
The diophantine equation
\begin{equation}\label{thm3.10eq1}
L_{n_1}L_{n_2}L_{n_3}\cdots L_{n_k} + 1 = L_m^2
\end{equation}
with $m\geq 0$, $k\geq 1$, and $0\leq n_1\leq n_2 \leq \cdots \leq n_k$ has a solution if and only if $m = 0, 2, 4$. The nontrivial solutions to \eqref{thm3.10eq1} are given by 
\begin{align*}
L_2&+1 = L_0^2,\quad L_0L_3+1 = L_0^3+1 = L_2^2,\\
L_0^4&L_2+1 = L_0^2L_2L_3+1 = L_2L_3^2+1 = L_4^2.
\end{align*}
Here nontrivial solutions means that either $k=1$ or $k\geq 2$ and $n_j\neq 1$ for any $j \in \{1, 2, \ldots, k\}$.
\end{theorem}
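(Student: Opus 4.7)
The plan is to mimic the proofs of Theorems \ref{thm3.9} and \ref{thm2.4lm=f}, splitting according to the parity of $m$ via Lemma \ref{lemma2}(i). When $m$ is odd, Lemma \ref{lemma2}(i) rewrites \eqref{thm3.10eq1} as $L_{n_1}L_{n_2}\cdots L_{n_k} = 5F_{m-1}F_{m+1}$. Since $z(5)=5$ is odd, no Lucas number is divisible by $5$ (by the criterion invoked in the proof of Theorem \ref{thm2.4lm=f}); the left hand side is coprime to $5$ while the right hand side is not, a contradiction for $m\geq 3$. The case $m=1$ would require a nonempty product of positive integers to equal $0$, which is impossible.

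When $m$ is even, Lemma \ref{lemma2}(i) rewrites \eqref{thm3.10eq1} as $L_{n_1}\cdots L_{n_k}F_m = F_{3m}$. Letting $\ell$ be the smallest index with $n_\ell\geq 2$, and using $L_0=2$ together with $L_n = F_{2n}/F_n$ for $n\geq 1$, I would clear denominators to obtain
$$2^{\ell-1}F_m \prod_{j=\ell}^{k} F_{2n_j} = F_{3m} \prod_{j=\ell}^{k} F_{n_j}.$$
Then I would apply Carmichael's primitive divisor theorem iteratively, exactly as in the proofs of Theorems \ref{thm2.4lm=f} and \ref{thm3.9}. For $m\geq 6$ with $3m$ non-exceptional, the primitive divisor of $F_{3m}$ forces $2n_k = 3m$, hence $n_k = 3m/2$. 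After cancelling, the primitive divisor of $F_{3m/2}$ (for $3m/2$ non-exceptional) must match a factor on the left, which after a short case analysis forces $4\mid m$ (otherwise immediate contradiction). In the remaining case $4\mid m$, a third comparison shows that $F_m$ now dominates the left hand side with no matching factor on the right, again a contradiction (for $m$ non-exceptional).

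The finitely many even $m$ bypassed by this descent are $m\in\{0,2,4,6,8,12\}$, those for which one of the three primitive-divisor steps hits an exceptional Carmichael index. Direct factorization of $L_m^2-1$ then produces the three listed solutions for $m\in\{0,2,4\}$ and rules out $m\in\{6,8,12\}$: for instance, $L_6^2-1 = 323 = 17\cdot 19$ where $17$ divides no Lucas number (since $z(17)=9$ is odd); $L_8^2-1 = 2208 = 2^5\cdot 3\cdot 23$, and the prime $23$ first enters the Lucas sequence at $L_{12}=322$, which does not divide $2208$; and $L_{12}^2-1 = 103683 = 3\cdot 17\cdot 19\cdot 107$, again involving $17$. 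The main obstacle will be executing the descent cleanly—ruling out alternative primitive-divisor matchings when some $n_j$'s repeat and correctly handling the indices that fall into the exceptional set $\{1,2,6,12\}$.
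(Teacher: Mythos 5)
Your proposal follows the paper's proof essentially verbatim: odd $m$ is eliminated because the right-hand side $5F_{m-1}F_{m+1}$ is divisible by $5$ while no Lucas number is, and even $m$ is handled by the same three-step primitive-divisor descent on $2^{\ell-1}\prod_j (F_{2n_j}/F_{n_j})\cdot F_m = F_{3m}$ (forcing $2n_k=3m$, then $n_k=2n_{k-1}$, then a contradiction from the primitive divisor of $F_m$ since $n_{k-1}=3m/4<m$), leaving only small even $m$ to check directly. Your explicit eliminations of $m=6,8,12$ via the primes $17$ and $23$, which the paper dismisses as ``easily checked,'' are correct.
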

\begin{proof}
We still follow the argument used in the proof of Theorem \ref{thm2.4lm=f} and let $\ell$ be defined in the same way. If $m$ is odd, then we apply Lemma \ref{lemma2}(i) to write \eqref{thm3.10eq1} as
\begin{equation}\label{thm3.10eq2}
L_{n_1}L_{n_2}L_{n_3}\cdots L_{n_k} = 5F_{m-1}F_{m+1}.
\end{equation}
Since $5$ does not divide any Lucas number, \eqref{thm3.10eq2} is impossible. So there is no solution to \eqref{thm3.10eq1} in this case. Next assume that $m$ is even and $m\geq 14$. By Lemma \ref{lemma2}(i) and the identity $F_{2n} = F_nL_n$, we can write \eqref{thm3.10eq1} as
\begin{equation}\label{thm3.10eq3}
2^{\ell-1}\frac{F_{2n_\ell}}{F_{n_\ell}}\frac{F_{2n_{\ell+1}}}{F_{n_{\ell+1}}} \cdots \frac{F_{2n_k}}{F_{n_k}}F_m = F_{3m}.
\end{equation}
By Theorem \ref{carthm}, $3m = 2n_k$ and \eqref{thm3.10eq3} is reduced to  
\begin{equation}\label{thm3.10eq4}
2^{\ell-1}\frac{F_{2n_\ell}}{F_{n_\ell}}\frac{F_{2n_{\ell+1}}}{F_{n_{\ell+1}}} \cdots \frac{F_{2n_{k-1}}}{F_{n_{k-1}}}F_m = F_{n_k}.
\end{equation}
Since $n_k = \frac{3m}{2} > m \geq 14$, we obtain by Theorem \ref{carthm} that $n_k = 2n_{k-1}$ and \eqref{thm3.10eq4} is reduced to 
\begin{equation}\label{thm3.10eq5}
2^{\ell-1}\frac{F_{2n_\ell}}{F_{n_\ell}}\frac{F_{2n_{\ell+1}}}{F_{n_{\ell+1}}} \cdots \frac{F_{2n_{k-2}}}{F_{n_{k-2}}}F_m = F_{n_{k-1}}.
\end{equation}
Now $n_{k-1} = \frac{n_k}{2} = \frac{3m}{4} < m$, so $F_m$ has a primitive divisor which does not divide $F_{n_{k-1}}$. Therefore \eqref{thm3.10eq5} is impossible. Hence there is no solution in this case. So we only need to consider $m\leq 12$ and $m$ is even. This can be easily checked. So the proof is complete.  
\end{proof}

\subsection{The equation $F_{n_1}F_{n_2}F_{n_3}\cdots F_{n_k}\pm 1 = F_m^2$}

Following Szalay \cite{Sza}, we let
\begin{align*}
\varepsilon &= \varepsilon(m) = \begin{cases}
1, & \text{if $m$ is odd};\\
2, & \text{if $m$ is even},
\end{cases} \\
\delta &= \delta(m) = \begin{cases}
1, & \text{if $m$ is even};\\
2, & \text{if $m$ is odd}.
\end{cases}
\end{align*}
Then we have the following result.
\begin{theorem}\label{thm3.7}
The diophantine equation
\begin{equation}\label{thm3.7eq1}
F_{n_1}F_{n_2}F_{n_3}\cdots F_{n_k}-1 = F_m^2
\end{equation}
with $m\geq 0$, $k\geq 1$, and $0\leq n_1 \leq n_2 \leq \cdots \leq n_k$ has a solution for every $m\geq 0$. The nontrivial solutions to \eqref{thm3.7eq1} are given by 
$$
F_1-1 = F_2-1 = F_0^2,\quad F_3-1 = F_1^2, \quad F_3-1 = F_2^2, \quad F_5-1 = F_3^2,
$$
and an infinite family of solutions:
$$
F_{m-\delta}F_{m+\delta}-1 = F_m^2\quad\text{for all $m\geq 4$}.
$$
Here nontrivial solutions means that either $k=1$ or $k\geq 2$ and $n_1\geq 3$.
\end{theorem}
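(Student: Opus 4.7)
The plan is to mirror the strategy of Theorems~\ref{thm1} and~\ref{thm6}, this time factoring the right-hand side using Lemma~\ref{lemma1}(ii). With $\delta=\delta(m)$ as defined above, equation \eqref{thm3.7eq1} rewrites as
\[
F_{n_1}F_{n_2}\cdots F_{n_k} \;=\; F_m^2+1 \;=\; F_{m-\delta}F_{m+\delta}.
\]
Note that this identity already produces the claimed infinite family: taking $k=2$ with $(n_1,n_2)=(m-\delta,m+\delta)$ gives $F_{m-\delta}F_{m+\delta}-1=F_m^2$ for every $m\geq 4$, and the constraint $m\geq 4$ is exactly what ensures $n_1=m-\delta\geq 3$, so the solution is nontrivial. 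The task is therefore to show that for large $m$ these are the only nontrivial solutions, and to mop up the remaining small $m$ by inspection.

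Assume first that $m\geq 14$, so that $m-\delta$ and $m+\delta$ both lie outside the exceptional set $\{1,2,6,12\}$. Apply Theorem~\ref{carthm} to $F_{m+\delta}$: it has a primitive divisor, and the comparison-of-largest-index argument used throughout Section~3 forces $n_k=m+\delta$. Cancelling reduces the equation to
\[
F_{n_1}\cdots F_{n_{k-1}} \;=\; F_{m-\delta}.
\]
If $k=1$ this demands $F_{m-\delta}=1$, which fails for $m\geq 14$. If $k\geq 2$, apply Theorem~\ref{carthm} once more: any discrepancy between $n_{k-1}$ and $m-\delta$ produces a primitive divisor on one side not dividing the other, so $n_{k-1}=m-\delta$. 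This leaves $F_{n_1}\cdots F_{n_{k-2}}=1$, which under the nontriviality hypothesis $n_1\geq 3$ is possible only when $k=2$. Hence for $m\geq 14$ the only nontrivial solution is the corresponding member of the infinite family.

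The remaining task is to enumerate $0\leq m\leq 13$ by inspection: for each such $m$, list the factorizations of $F_m^2+1$ as products of Fibonacci numbers of index $\geq 3$, together with the $k=1$ case in which $F_m^2+1$ is itself a Fibonacci number. This finite check recovers the sporadic solutions $F_1-1=F_2-1=F_0^2$, $F_3-1=F_1^2$, $F_3-1=F_2^2$, $F_5-1=F_3^2$ and confirms that the remaining $4\leq m\leq 13$ contribute only the appropriate members of the infinite family. The main obstacle is merely careful bookkeeping of the Carmichael exceptions and of the $k=1$ branch; unlike in Theorem~\ref{thm2.4lm=f}, no modular sieve using $5,13,17$ is required because the primitive divisor argument closes up without the Lucas-indivisibility trick.
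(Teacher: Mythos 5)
Your proposal is correct and follows essentially the same route as the paper: rewrite $F_m^2+1$ as $F_{m-\delta}F_{m+\delta}$ via Lemma \ref{lemma1}(ii), use Carmichael's primitive divisor theorem twice to pin down $n_k=m+\delta$, $n_{k-1}=m-\delta$, $k=2$ for $m\geq 14$, and finish the small cases $m\leq 13$ by hand. The only difference is cosmetic — you treat the even and odd cases uniformly through $\delta$, whereas the paper splits them — and your closing observation that no modular sieve is needed here matches the paper's treatment.
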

\begin{proof}
The proof of this theorem is similar to the others, so we only give a brief discussion. If $m$ is even and $m\geq 14$, we apply Lemma \ref{lemma1}(ii) to write \eqref{thm3.7eq1} as
$$
F_{n_1}F_{n_2}F_{n_3}\cdots F_{n_k} = F_{m-1}F_{m+1}.
$$
Applying Theorem \ref{carthm} repeatedly, we obtain $m+1 = n_k$, $m-1 = n_{k-1}$, and $k=2$. If $m$ is odd and $m\geq 15$, we apply Lemma \ref{lemma1}(ii) and follow the same argument to obtain $m+2 = n_k$, $m-2 = n_{k-1}$, and $k=2$. The case $m\leq 13$ can be checked by hand.  
\end{proof}
\begin{theorem}\label{thm8l2}
The diophantine equation
$$
F_{n_1}F_{n_2}F_{n_3}\cdots F_{n_k}+1 = F_m^2
$$
with $m\geq 0$, $k\geq 1$, and $0\leq n_1\leq n_2 \leq \cdots \leq n_k$ has a solution for every $m\geq 1$. The nontrivial solutions to the above equations are given by
\begin{align*}
F_0&+1 = F_1^2,\quad F_0+1 = F_2^2,\quad F_4+1 = F_3^2,\quad F_3^3+1 = F_6+1 = F_4^2,  \\
F_3^3&F_4+1 = F_5^2,\quad F_3^3F_8+1 = F_7^2,\quad F_3^3F_{10}+1 = F_8^2,\quad AF_8+1 = F_{10}^2,\\
AF&_{10}+1 = F_{11}^2, \quad AF_{14}+1 = F_{13}^2,\quad AF_{16}+1 = F_{14}^2,
\end{align*}
where $A = F_3F_4^2F_6 = F_3^4F_4^2$, and an infinite family of solutions:
$$
F_{m-\varepsilon}F_{m+\varepsilon} + 1 = F_m^2\quad\text{for all $m\geq 5$}.
$$
Here nontrivial solutions means that either $k=1$ or $k\geq 2$ and $n_1\geq 3$.
\end{theorem}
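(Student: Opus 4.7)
The plan is to adapt the proof of Theorem \ref{thm3.7} with Lemma \ref{lemma1}(i) in place of Lemma \ref{lemma1}(ii). Rewrite the equation as $F_{n_1}F_{n_2}\cdots F_{n_k} = F_m^2 - 1$; Lemma \ref{lemma1}(i) factors the right-hand side as $F_{m-\varepsilon}F_{m+\varepsilon}$, where $\varepsilon = 1$ if $m$ is odd and $\varepsilon = 2$ if $m$ is even (so that $m-\varepsilon$ is even and nonnegative for $m\geq 2$). This immediately produces the infinite family $F_{m-\varepsilon}F_{m+\varepsilon}+1 = F_m^2$; the task is to show that essentially nothing else occurs.

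I would first handle $m\geq 15$, so that $m-\varepsilon \geq 13$ and both indices $m\pm\varepsilon$ lie outside the exceptional set $\{1,2,6,12\}$ of Theorem \ref{carthm}. Then $F_{m+\varepsilon}$ has a primitive divisor $p$, which must divide some $F_{n_j}$, forcing $(m+\varepsilon)\mid n_j$; conversely no $n_j$ can exceed $m+\varepsilon$, since $F_{n_j}$ would then contribute a primitive divisor absent on the right. Hence $n_k = m+\varepsilon$. Cancelling yields $F_{n_1}\cdots F_{n_{k-1}} = F_{m-\varepsilon}$, and the same argument applied to the primitive divisor of $F_{m-\varepsilon}$ gives $n_{k-1} = m-\varepsilon$ and $F_{n_1}\cdots F_{n_{k-2}} = 1$. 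The assumption $n_1\geq 3$ rules out further nontrivial factors, forcing $k=2$ and leaving only the infinite family.

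The remaining cases $m\leq 14$ I would dispatch directly. The extra listed solutions occur exactly when $m-\varepsilon$ or $m+\varepsilon$ equals $6$ or $12$, because the Carmichael exceptions $F_6 = F_3^3$ and $F_{12} = F_3^4 F_4^2 = F_3 F_4^2 F_6$ admit several representations as products of Fibonacci numbers. For $m \in \{4,5,7,8,10,11,13,14\}$ the factorization of $F_{m-\varepsilon}F_{m+\varepsilon}$ is therefore not unique. For example, at $m = 11$ we have $F_{n_1}\cdots F_{n_k} = F_{10}F_{12} = 5\cdot 11\cdot 2^4\cdot 3^2$: the factor $F_{10}$ is forced by the primitive divisor $11$, while the remaining $144$ factors as $F_{12}$, $F_3 F_4^2 F_6$, or $F_3^4 F_4^2$, producing the three listed forms of $AF_{10}+1 = F_{11}^2$. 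The trivial-index small cases $F_0+1 = F_1^2 = F_2^2$ and $F_4+1 = F_3^2$ cover $m\leq 3$.

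The main obstacle will be the bookkeeping for $5 \leq m \leq 14$: one must enumerate every way to write $F_m^2 - 1$ as a product of Fibonacci numbers with repetition, carefully tracking how the non-primitive values $F_6$ and $F_{12}$ interact with the base factorization $F_{m-\varepsilon}F_{m+\varepsilon}$. The large-$m$ argument, in contrast, is essentially mechanical once Theorem \ref{carthm} is invoked twice, and the structural observation that Carmichael's exceptions are exactly the source of every extra solution is what makes the problem tractable.
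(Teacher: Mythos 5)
Your proposal is correct and follows essentially the same route as the paper, which proves this theorem by repeating the argument of Theorem~\ref{thm3.7} with Lemma~\ref{lemma1}(i) in place of Lemma~\ref{lemma1}(ii): two applications of Theorem~\ref{carthm} force $n_k=m+\varepsilon$, $n_{k-1}=m-\varepsilon$, $k=2$ for large $m$, and the small cases are checked by hand. Your additional observation that the sporadic solutions arise precisely from the Carmichael exceptions $F_6=F_3^3$ and $F_{12}=F_3^4F_4^2$ is a correct and useful elaboration of the finite check that the paper leaves to the reader.
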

\begin{proof}
The proof of this theorem is similar to that of Theorem \ref{thm3.7}. The only difference is that we apply Lemma \ref{lemma1}(i) instead of Lemma \ref{lemma1}(ii). We leave the verification to the reader. 
\end{proof}
\noindent\textbf{Comments:} The author believes that his method can be used to solve other equations of this type where $(F_n)_{n\geq 1}$ and $(L_n)_{n\geq 1}$ are replaced by some general second order linear recurrence sequences. But the author will leave this problem to the interested reader. Nevertheless, he will consider another Fibonacci version of Brocard-Ramanujan equation in the next article.

\noindent \textbf{Acknowledgment} The author would like to thank Professor D\c{a}browski and Professor Szalay for sending him their articles (\cite{Dab}, \cite{DUl}, \cite{Sza}) which give him some ideas in writing this article. He also thanks the anonymous referee for carefully reading the manuscript and for his or her suggestions which help improve the presentation of this article.

\end{document}